\newcommand{\mc}{\mathcal}
\newcommand{\bb}{\mathbb}
\newcommand{\N}{\bb N}
\newcommand{\Z}{\bb Z}
\newcommand{\R}{\bb R}
\newcommand{\Nn}{\mathrm{N}}
\newcommand{\Pp}{\mathrm{P}}
\newcommand{\NN}{\mathcal{N}}
\newcommand{\PP}{\mathcal{P}}
\newcommand{\pos}{\mathrm{pos}}
\newcommand{\enc}{\mathrm{enc}}
\newcommand{\gens}{\operatorname{gens}}
\newcommand{\varA}{variant~A}
\newcommand{\varB}{variant~B}
\newcommand{\varC}{variant~C}
\DeclareRobustCommand{\qedify}[1]{%
  \ifmmode \quad\hbox{#1}
  \else
    \leavevmode\unskip\penalty9999 \hbox{}\nobreak\hfill
    \quad\hbox{#1}%
  \fi
}
\newtheorem{theorem}{Theorem}[section]
\newtheorem{lemma}[theorem]{Lemma}
\newtheorem{corollary}[theorem]{Corollary}
\theoremstyle{definition}
\newtheorem{Example}[theorem]{Example}
\newenvironment{example}[1][]{\begin{Example}[#1]\pushQED{\qedify{$\diamondsuit$}}}{\popQED\end{Example}}
\theoremstyle{remark}
\numberwithin{equation}{section}
\begin{document}

\title{Lattice games without rational strategies} 
\author{Alex Fink}
\begin{abstract}
We show that the lattice games of Guo and Miller support universal
computation, disproving their conjecture that all lattice games have
rational strategies.  We also state an explicit counterexample to that conjecture:
a three dimensional
lattice game whose set of winning positions does not have a rational
generating function.
\end{abstract}
\maketitle

\section{Introduction}

% Say grander things here!  One should view this work as a link
% in tying combinatorial game theory into mainstream mathematics.
% Lattice games were a way to do that, but what this shows is
% that the power of the resulting theory is too high; lattice games
% and cellular automata are computationally equivalent.

Combinatorial game theory is concerned with sequential-play games with perfect information.
A class of games of fundamental interest are the {\em impartial games},
the games with an acyclic directed graph of 
{\em positions}, in which two players alternate moving from
the current position to one of its {\em options}, i.e.\ its neighbours along out-edges.  
We will insist that the graph be locally finite in the sense that
there are only finitely many positions reachable by sequences of moves from each given position.
We are interested in ascertaining the {\em outcome} of each position,
that is, who will win with perfect play, the player who is next to move
or the other player.
We call a position an {\em $\Nn$-position} (for {\bf n}ext player win) in the former case,
and a {\em $\Pp$-position} (for {\bf p}revious player win) in the latter.

The first theorem that can be said to belong to combinatorial game theory generally,
as opposed to some particular game,
is the Sprague-Grundy theorem~\cite{Sprague,Grundy}.  It asserts that,
under the {\em normal play} convention in which a player with no available
move loses, every position in an impartial game is equivalent to
a position in one-heap Nim.  
The positions of one-heap Nim are $\N=\{0,1,\ldots\}$, and the options 
of~$n$ are $\{0,\ldots,n-1\}$.
The notion of equivalence invoked here is the following.
The {\em disjunctive sum} $G+H$ of two combinatorial games $G$ and $H$
is the game whose graph of positions is the Cartesian product of these
graphs for $G$ and $H$; that is to say, a position of the sum
consists of a position of $G$ and a position of $H$, and a
move in the sum consists of choosing one of $G$ and $H$ and moving in it.  
Two impartial games $G$ and $G'$ are then {\em equivalent} if $G+H$ and $G'+H$
have the same outcome for every impartial game $H$.

In order for this theory, or any other, 
to be of practical use for a given game, it should provide a 
computationally efficient way to compute outcomes of positions.  One family
for which the Sprague-Grundy theory achieves this are the {\em octal games} \cite{GS}, 
including several games which have historically been of interest.  
In the recent paper~\cite{GM}, Guo and Miller propose the class 
of {\em lattice games} as an extension of the octal games, 
looking to use the theory of lattice points in polyhedra, especially
its algorithmic aspects, to obtain computational efficiency.  

The positions of a lattice game are an ideal $\mc B$
with finite complement in a pointed affine semigroup $\Lambda$.
The elements $\Lambda\setminus\mc B$ are called the {\em defeated positions},
though they are not positions as we use the term.
The other datum of a lattice game is its {\em ruleset}, a set 
$\Gamma$ of vectors in $\Z\Lambda$.  See~\cite{GM} for the precise axioms
that are required of~$\Gamma$.
A position $p$ is an option of a position $q$ if and only if $q-p\in\Gamma$; 
that is, a move in a lattice game consists of
subtracting a vector in $\Gamma$ from a position.
(It has been allowed that $\mc B$ not equal $\Lambda$ in order to subsume also
the {\em mis\`ere play} convention, in which a player with no moves wins rather than losing.
Mis\`ere play has proved much less tractable than the normal play to analyze.
However, weakening the notion of equivalence to restrict $H$ to positions of the
game in question leads to the {\em mis\`ere quotient} theory of 
Plambeck and Siegel~\cite{PS}, which has made advances in many particular cases.)
%Mis\`ere quotients have 
%permitted the solution of many particular mis\`ere games, but 
%general control over their structure is still largely lacking.
%The games we construct in this paper, like that of Example~\ref{ex:xor realised},
%are perhaps of interest from this perspective.
%  --- But I won't mention these.  They're not good examples,
%  exactly 

The structure on which are hung the algorithmic results of \cite{GM} and its 
followup \cite{GM2} is the notion of {\em affine stratification}.  A lattice game
has an affine stratification if its set $\PP$ of $\Pp$-positions 
is a finite disjoint union of modules for affine semigroups of $\Lambda$.  
In the language of discrete geometry,
this is true if and only if $\PP$ is a finite disjoint union of
intersections of rational polyhedra with sublattices of $\Z\Lambda$.
Guo and Miller also invoke the related notion of a {\em rational strategy}:
a game has a rational strategy if its set $\PP$ has rational 
multigraded generating function.  
Having an affine stratification is at least as strong as having a rational strategy, 
by standard facts in polyhedral geometry: see for instance \cite{BW}.  
But in fact the two conditions are equivalent \cite{Speyer-note}.

Guo and Miller conjectured that every lattice game possesses a rational strategy.
The purpose of this note is to provide counterexamples to that conjecture.
\begin{theorem}\label{thm:1}
There exist lattice games, with normal play on the board $\N^3$, with no 
rational strategy. 
\end{theorem}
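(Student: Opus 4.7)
The plan is to prove Theorem~\ref{thm:1} by showing that lattice games on $\N^3$ are powerful enough to simulate Turing-complete computation, and then exhibiting a concrete game whose $\Pp$-positions form a non-semilinear set (and therefore admit no rational generating function, by the correspondence between rational generating functions and semilinear supports cited above).

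My first step is a simulation construction: for every 2-counter Minsky machine $M$ I would build a lattice game on $\N^3$ whose play implements the computation of $M$. The first two coordinates track the values of the two counters, and the third coordinate encodes the machine's state and a time-like quantity that strictly decreases at each step; numbering the finitely many states $0,\ldots,k-1$ and stepping by $k$ in the third coordinate per simulated instruction ensures that no state is revisited within a single machine step. The ruleset $\Gamma$ contains, for each transition of $M$, a finite collection of move vectors realizing that transition --- remembering that moves \emph{subtract} from positions, so e.g.\ ``increment counter~$1$'' translates to a vector with $-1$ in the first coordinate. A zero-test transition is enforced by positioning its move vectors so that applying them when the tested counter is nonzero leaves $\mc B$, and hence is not a legal move.

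The main obstacle, and the step demanding the most care, is that lattice-game moves are chosen adversarially whereas a Minsky machine is deterministic. To handle this I would attach a forcing gadget at each configuration representing a machine state: all but one legal move lead into a small, bounded region whose $\Pp$/$\Nn$ analysis is immediate by direct backward induction and whose outcome is losing for the player who chose the deviant move. Optimal play is thereby channeled along the simulated trajectory, so that the $\Pp$/$\Nn$ status of each ``spine'' position is determined by some eventual feature of the computation starting from the corresponding machine configuration. Ensuring that $\Gamma$ stays finite, that $\mc B$ remains a complement-finite ideal of $\N^3$, and that the full axiom list of \cite{GM} is respected while accommodating the gadgets within only three coordinates, is the principal technical challenge; dimension is tight because the two counters and the state variable already saturate the board.

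With such a simulation in place, I would conclude by taking $M$ to be a 2-counter machine whose set of initial configurations with a particular long-term behaviour (say, ultimate halting) is non-semilinear; such $M$ exist because that halting problem is undecidable while semilinear subsets of $\N^2$ are Presburger-definable and hence decidable. The resulting lattice game then has a non-semilinear $\Pp$-position set, and therefore no rational strategy, proving Theorem~\ref{thm:1}. The explicit counterexample promised in the abstract would be obtained by spelling out this construction for a single fixed small machine~$M$ with non-semilinear behaviour.
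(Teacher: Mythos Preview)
Your approach has a genuine gap at the point where you pass from ``the game simulates a Minsky machine'' to ``the $\Pp$-set is non-semilinear''.  Because the ruleset of a lattice game must lie, together with the generators $(1,0,0),(0,1,0),(0,0,1)$, in an open halfspace, your time-like third coordinate must strictly decrease along every play.  Hence from a position $(a,b,s+kt)$ the game can simulate at most $t$ machine steps before the third coordinate is exhausted; the outcome $o(a,b,s+kt)$ therefore depends only on the first $t$ steps of the run from $(a,b,s)$, not on its eventual halting behaviour.  So the $\Pp$-set is \emph{not} (any simple transform of) the halting set of~$M$, and your final inference ``halting set non-semilinear $\Rightarrow$ $\Pp$-set non-semilinear'' is unsupported.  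A second, smaller issue: your zero-test mechanism is stated backwards.  Legality of a move $\gamma$ at $(a,b,c)$ is the condition $(a,b,c)-\gamma\in\N^3$, which can detect ``counter $\geq 1$'' but never ``counter $=0$''; so you cannot make the zero-branch illegal when the counter is nonzero, only the nonzero-branch illegal when the counter is zero.  This has to be handled entirely by your forcing gadgets, and you have not indicated how to do so within three coordinates.

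The paper avoids both problems by not trying to encode an unbounded-time predicate like halting at all.  The key observation is that the outcome rule ``$o(p)=\Pp$ iff every option has outcome $\Nn$'' is exactly a nor gate, so one can lay out a periodic nor-circuit in the slice $\N^2\times\{1\}$ that computes, at lattice points $m\ell$, a function $f(\ell)$ defined by a local recursion $f(\ell)=g(f(\ell-\beta_1),\ldots,f(\ell-\beta_r))$.  The third coordinate is not time: the slice $z=0$ is engineered to carry a fixed $mL$-periodic $\Pp$-pattern which the $z=1$ slice reads from, and $z\geq 2$ is irrelevant.  For Theorem~\ref{thm:1} one then simply takes $g=\mathrm{xor}$, so that $f(i,j)$ is the parity of $\binom{i+j}{i}$; the Sierpinski--gasket pattern is shown directly, by an elementary doubling argument, to admit no affine stratification.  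Universality (Theorem~\ref{thm:universal}) comes by taking $g$ to be the local rule of a universal $1$-dimensional cellular automaton, whose space--time diagram is again a terminating recursion of this shape; no appeal to halting sets or Minsky machines is needed.
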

An explicit example of a game with no rational strategy, 
with rule set of size 28, is constructed in Example~\ref{ex:xor realised}.

In terms of the goals of~\cite{GM}, the import of Theorem~\ref{thm:1}
is that lattice games are too broad a class of games
for efficient computation of strategies to be possible; that is to say,
lattice games themselves retain too much computational power.
It seems possible that the lattice game perspective may only prove 
fruitful in this regard for the {\em squarefree lattice games} of \cite{GM},
which already contain the octal games,
and which are proved in that work to have rational strategies in normal play.  
It is perhaps unsurprising that squarefree lattice games should be
amenable to this approach, in view of the Sprague-Grundy theorem
and the fact that it is exactly in the squarefree lattice games 
that for two positions $p,q\in\mc B$, the position $p+q$ that 
is their sum in the semigroup $\Lambda$ is also their disjunctive sum.
Needless to say, the lattice games we construct are not squarefree.

The construction that underlies Theorem~\ref{thm:1}
works by arranging that the outcomes of a certain subset 
of the board should compute a given recursively-defined function 
$f$ on a rank 2 affine semigroup,
% $f:M\to\Sigma$, where $\Sigma$ is a finite set and $M$ is a module for some subsemigroup of $\N^2$,
i.e.\ a function to a set $\Sigma$ satisfying
\[f(\ell) = g(f(\ell-\beta_1),\ldots,f(\ell-\beta_r))\]
% whenever $\ell-\beta_i\in M$ for all $i$, 
for some $g:\Sigma^r\to \Sigma$.
(See Section~\ref{sec:construction} for a more careful statement.)
% and where the $\beta_i\in L$ are vectors
% such that there exists a linear functional pairing positively with all the $\beta_i$
% and the generators of the recession cone of $\conv(L^+)$.
Among the functions of this form are the functions recording the states of the cells
of 1-dimensional cellular automata; therefore, lattice games can 
realize any phenomenon that 1-dimensional cellular automata can.
In particular, since cellular automata are known to be capable of 
universal computation (see \cite{Smith} and several subsequent works), 
the same is true of lattice games.
\begin{theorem}\label{thm:universal}
The class of lattice games in $\N^3$ is capable of universal computation.  
\end{theorem}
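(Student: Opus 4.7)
My plan is to leverage the construction mentioned just before the statement (and carried out in Section~\ref{sec:construction}): given an arbitrary recursion
\[ f(\ell) = g(f(\ell-\beta_1),\ldots,f(\ell-\beta_r)) \]
with finite target set $\Sigma$, rule $g\colon\Sigma^r\to\Sigma$, and displacements $\beta_i$, together with finite initial data on a rank 2 affine semigroup, one obtains a lattice game on $\N^3$ whose $\Pp$-positions, intersected with a distinguished slice of the board, encode the values of $f$. Given this, Theorem~\ref{thm:universal} reduces to exhibiting a single recursion of this form whose values compute an arbitrary Turing machine.

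The first step in the reduction is to recognize that the time evolution of a one-dimensional cellular automaton with finite alphabet $\Sigma$, neighbourhood radius $k$, and local rule $g\colon\Sigma^{2k+1}\to\Sigma$ is such a recursion: the state $c_t(x)$ satisfies
\[ c_t(x) = g\bigl(c_{t-1}(x-k),\,c_{t-1}(x-k+1),\,\ldots,\,c_{t-1}(x+k)\bigr), \]
so setting $\ell=(t,x)$ and $\beta_i=(1,i-1-k)$ for $i=1,\ldots,2k+1$ puts the automaton into the required form. All of the $\beta_i$ share the same positive first coordinate, so they lie in a common open half-space, and in particular fit inside a rank 2 pointed affine semigroup containing all relevant lattice points.

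The second step is to invoke the universality of one-dimensional cellular automata, going back to \cite{Smith} and refined in later work: there exists an explicit one-dimensional cellular automaton which, starting from a finitely-describable initial tape, simulates an arbitrary Turing machine computation. Composing this with the previous step produces, for each Turing machine input, a lattice game on $\N^3$ whose $\Pp$-positions encode the corresponding computation; this is the sense in which the class of lattice games on $\N^3$ admits universal computation.

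The step that will require the most care is the treatment of initial data. Known universal one-dimensional cellular automata are typically simulated against an infinite periodic background rather than a quiescent one, so the initial configuration for the recursion is not literally of finite support. The standard workaround is to observe that a periodic background of period $p$ is itself generated by the recursion once a single period is prescribed, and can therefore be absorbed into the finite initial data fed to the construction. Verifying this---and checking along the way that the finitary axioms imposed on a lattice game ruleset in \cite{GM} are preserved---is the one nontrivial bookkeeping task; everything else in the proof is a direct application of the construction and the well-known computational power of cellular automata.
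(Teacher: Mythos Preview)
Your proposal follows exactly the paper's route: cast a one-dimensional cellular automaton as a recursion of the form~\eqref{eq:recursion}, feed it to the construction of Section~\ref{sec:construction} (whose correctness is Lemmas~\ref{lem:possible}--\ref{lem:cstr}), and invoke Smith~\cite{Smith} for universality of such automata.

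One technical slip to fix: the construction requires the displacements $\beta_i$ to satisfy the tangent cone axiom for~$\N^2$, i.e.\ some $\beta_i$ must have $(\beta_i)_1\leq 0$ and some $(\beta_{i'})_2\leq 0$ (this is used in the proof of Lemma~\ref{lem:axioms}). Your choice $\beta_i=(1,i-1-k)$ has every first coordinate equal to~$1$, so it does not qualify as stated. The standard repair is to pass to light-cone coordinates, say $(u,v)=(kt-x,\,kt+x)$, under which the displacements become $(2k+1-i,\,i-1)$ for $i=1,\ldots,2k+1$; the extremes $i=2k+1$ and $i=1$ then give the required nonpositive first and second coordinates respectively, and the image lattice serves as~$L$. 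Your concern about periodic backgrounds is unneeded here: Smith's universal automata operate over a quiescent background, which is precisely what the default value~$\sigma_0$ in~\eqref{eq:recursion} supplies, so the finite initial data on $\gens M$ (handled by \varA\ or~\varB) suffices.
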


We will describe variants of our construction allowing
the input to the cellular automaton to be provided in either of the following ways:
\begin{enumerate}
\renewcommand{\labelenumi}{(\Alph{enumi})}
\item one ruleset which is computationally universal, with the input to
the universal function encoded in the defeated positions;
\item a family of rulesets which themselves encode the input,
in normal play (i.e.\ with no defeated positions).
\end{enumerate}
In both cases the output of the computation is given by the outcomes of certain positions.
In particular we get the following corollary.
\begin{corollary}\mbox{}
\begin{enumerate}
\renewcommand{\labelenumi}{(\alph{enumi})}
\item There exists a ruleset for a lattice game on $\N^3$ such that, given a set of defeated positions
$\mc D$ and and integers $m, a, b$,
it is undecidable whether there exist $i,j\in\N$ such that $(mi+a,mj+b,1)$ is a $\Pp$-position.
\item Given a ruleset for a lattice game on $\mc B=\N^3$ with no defeated positions, and integers $m, a, b$,
it is undecidable whether there exist $i,j\in\N$ such that $(mi+a,mj+b,1)$ is a $\Pp$-position.
\end{enumerate}
\end{corollary}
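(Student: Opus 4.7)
The plan is to reduce the halting problem to each of the two decision questions, using Theorem~\ref{thm:universal} together with variants A and B of the construction.

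Recall that the construction underlying Theorem~\ref{thm:1} (Section~\ref{sec:construction}) simulates a one-dimensional cellular automaton whose cells and time steps are indexed by a rank-2 affine sublattice of a fixed two-dimensional slice of $\N^3$, and encodes the state of each cell at each time step as the outcome ($\Nn$ or $\Pp$) of the corresponding board position. Since one-dimensional cellular automata are capable of universal computation \cite{Smith}, we may fix such an automaton $C$ that simulates a universal Turing machine $U$.

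For part (a), fix once and for all the ruleset $\Gamma$ produced by variant A for the automaton $C$; this ruleset is computationally universal, taking its input from the set $\mc D$ of defeated positions. Given a Turing machine $M$ and input $x$, use the encoding of variant A to produce defeated positions $\mc D(M,x)$ representing $(M,x)$ as input to $U$. The construction fixes a scaling factor $m$ and offsets so that the positions recording the state of any given cell of $C$ over all time steps form an arithmetic progression of the form $\{(mi+a, mj+b, 1) : i,j\in\N\}$; by a harmless relabelling of the automaton's state set, we may arrange that the distinguished "halt" state corresponds to the $\Pp$-outcome and that no other state on that cell does. Under this encoding, the existence of $i,j\in\N$ with $(mi+a, mj+b, 1)$ a $\Pp$-position is equivalent to $U$ halting on $(M,x)$, which is undecidable because $U$ is universal. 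Part (b) is formally identical, with variant B used in place of variant A, so that $(M,x)$ is folded into the ruleset itself and $\mc D=\emptyset$.

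The only delicate point is the choice of the triple $(m,a,b)$ and the relabelling of states that make the halt indicator coincide with a single $\Pp$-position on the specified arithmetic progression. Both pieces of bookkeeping are intrinsic to the construction of Section~\ref{sec:construction}: the sublattice structure of the simulated automaton's space-time grid is built into variants A and B, and the relabelling of the automaton's states is a standard maneuver independent of the lattice-game machinery. Granted these, the reductions are immediate.
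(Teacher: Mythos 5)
Your proposal is correct and matches the paper's intended argument: the paper states this corollary as an immediate consequence of Theorem~\ref{thm:universal} together with the two input-encoding variants A and B (input via defeated positions for part~(a), input via the ruleset for part~(b)), with the output read off the outcomes of the positions $\pos(out_j)+m\ell$, which is precisely the halting-problem reduction you describe. The only nitpick is that the quantification over both $i$ and $j$ sweeps the whole space-time diagram of the simulated automaton (one bit of the state encoding at every cell and every time step), not a single cell over time, but your reduction is stated correctly for that reading.
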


\subsection*{Conventions}
For sets $P$ and $Q$, $P+Q$ is the Minkowski sum, $-Q = \{-q:q\in Q\}$, 
and $P-Q=P+(-Q)$.  This last is not the set difference, which is written $P\setminus Q$.

For lattice games, we follow the notation and definitions of~\cite{GM}.
We will supplement this by speaking of the outcome function
$o:\mathcal B\to O$ from the positions $\mathcal B$ of a lattice game 
to the set of two symbols $O=\{\Pp,\Nn\}$; this is defined by
$o(p)=\Pp$ if $p$ is a $\Pp$-position, and $o(p)=\Nn$ if $p$ is an $\Nn$-position.
(In~\cite{GM}, the authors mostly speak of bipartitions; they would write
$p\in\PP$ and $p\in\NN$ to express these.)  
%We also use the conventional names {\em $\Pp$-position} for an element of $\PP$,
%and {\em $\Nn$-position} for an element of $\NN$.

\section{The construction}\label{sec:construction}

In this section, the remainder of the paper, we provide a construction proving 
Theorems \ref{thm:1} and~\ref{thm:universal}, and close with a concrete example of 
a lattice game without a rational strategy.
In fact, we will describe three closely related variants of the construction at once.  
Two of them, variants~A and~B, correspond to the two ways of providing the input of a computation
mentioned just after Theorem~\ref{thm:universal}.  The third, call it variant~C for completeness,
will correspond to using neither of these and not providing for any sort of input.

We first set up the recursively defined function $f$ which we will be arranging
for the outcomes of our game to compute.
Let $L$ be a sublattice of $\Z^2$, $L^+=L\cap\N^2$, and let $M$ be an ideal in $L^+$
with finite complement.
Let $\beta_1,\ldots,\beta_r\in L$ be nonzero vectors in the same halfspace,
which themselves satisfy the tangent cone axiom for~$\N^2$.  
Let $\Sigma$ be a finite set.
Let $g:\Sigma^r\to\Sigma$ be a map of sets, and define 
$f:M\to\Sigma$ as follows.  
If $\ell\in M$ is a minimal $L^+$-module generator of $M$,
we may let $f(\ell)$ take any value $f_0(\ell)$.
(We will use this in \varA\ to provide the input.)
If $\ell$ is not a generator, then we let
\begin{equation}\label{eq:recursion}
f(\ell) = \left\{\begin{array}{ll}
g(f(\ell-\beta_1),\ldots,f(\ell-\beta_r)) & \mbox{$\ell-\beta_i\in M$ for all $i$}\\
\sigma_0 & \mbox{otherwise}
\end{array}\right.
\end{equation}
for some fixed $\sigma_0\in\Sigma$.

In outline, our construction will produce a periodic circuit of nor gates computing $f$,
and realize this on a union of dilates of $M$ inside the slice $\N^2\times\{1\}$ 
of the board $\N^3$.  Some of the moves in the ruleset will correspond to 
the wires in the circuit.  At the heart of this is the fact that,
if we identify $\Pp$ with true and $\Nn$ with false, 
then nor is the function that computes the outcome of a position
from the outcomes of its options.  The rest of the moves in the ruleset
force the outcomes on $\N^2\times\{0\}$ to have the same period as the circuit construction,
and the outcomes on $\N^2\times\{1\}$ minus the circuit to all be $\Nn$
so as not to interfere with the circuit.  The positions with 
third coordinate $\geq2$ are irrelevant to the construction.

Let $O=\{\Pp,\Nn\}$ be the set of outcomes.  
We begin by choosing an encoding $\enc:\Sigma\hookrightarrow O^s$ for some $r$,
such that $\enc(\sigma_0)=(\Nn,\ldots,\Nn)$.  
Let $\tilde g:O^{rs}\to O^s$ be the corresponding encoding
of $g$, i.e.\ a function such that
$\tilde g(\enc(\sigma_1),\ldots,\enc(\sigma_s) = \enc(g(\sigma_1,\ldots,\sigma_s))$
for all $\sigma_1,\ldots,\sigma_s$.  
Any boolean function may be computed by a circuit of nor gates.  Let $G'$ be a
directed acyclic graph giving such a circuit for $\tilde g$, as follows.  
Among the vertices of $G'$ (the {\em gates}) will be 
$rs$ inputs $in_{11},\ldots,in_{rs}$, with no in-edges, 
and $s$ outputs $out_1,\ldots,out_s$, with no out-edges.  We require that
when $o:V(G')\to O$ is the function such that 
\begin{equation}\label{eq:o(G)0}
o(in_{ij}) = \enc(\sigma_i)_j
\end{equation}
and 
\begin{equation}\label{eq:o(G)1}
o(v)=\mathrm{nor}(v_1,\ldots,v_k)
\end{equation}
for any non-input vertex $v$,
then $o(out_j)=\enc(g(\sigma_1,\ldots,\sigma_s))_j$.
Here $\mathrm{nor}:O^k\to O$ takes the value $\Pp$ on the tuple $(\Nn,\ldots,\Nn)$
and N elsewhere.

We may assume, for each $i$, that $G'$ contains a path from $in_{ij}$ to $out_{j'}$
for some $j$ and~$j'$.  If not, $g$ must not depend on its $i$-th parameter, 
and that parameter may as well not be present.  Moreover, we may choose the encoding $\enc$
so that the following holds: $\enc(g(\sigma_1,\ldots,\sigma_s))_1$ 
depends nontrivially on some $\sigma_i$ such that $(\beta_i)_1\leq 0$, and
$\enc(g(\sigma_1,\ldots,\sigma_s))_s$ 
depends nontrivially on some $\sigma_{i'}$ such that $(\beta_{i'})_2\leq 0$.
Then there are corresponding paths in $G'$ to $out_1$ and $out_s$.
(These $\beta$s exist by the tangent cone axiom, and this last assumption will in its turn
only be used to show the tangent cone axiom.)  

Let $G$ be $G'$ together with an additional vertex $in'$ and edges
$in'\to out_j$ for each $j$, as well as, 
for \varB,
another vertex $in''$ with edges to all of the $out_j$ and all $v$ such that $v\to out_j$. 
If $in'$ is treated as an extra input, and $in''$ is ignored for the moment, 
$G$ is a nor circuit for
\begin{equation}\label{eq:G circuit}
(\enc(\sigma_1),\ldots,\enc(\sigma_s),b)\mapsto\left\{\begin{array}{ll}
\enc(g(\sigma_1,\ldots,\sigma_s)) & b=\Nn \\
(\Nn,\ldots,\Nn) = \enc(\sigma_0) & b=\Pp \\
\end{array}\right..
\end{equation}
Eventually, the dependence on $b$ here will reproduce the choice between the cases
in~\eqref{eq:recursion}.

Next, we choose a way $\pos:V(G)\to\Z^2$ to position the gates in $\Z^2$,
as well as a positive integer $m$, 
and a nonempty finite subset $I\subseteq\N^2$, which is the complement of an ideal.
We impose the following conditions on $\pos$.
\begin{enumerate}
\renewcommand{\labelenumi}{(\alph{enumi})}
\item The differences $\pos(w)-\pos(v)$ for $v\to w$ an edge of $G$,
and the set $(\N^2-I)\setminus(I-I)$,
all lie in the same open halfspace $H$.
\item $\pos(in_{ij}) = \pos(out_j) - m\beta_i$ for all $i,j$.
\item If $w\in V(G)$ is not one of the $in_{ij}$, then for all $v\in V(G)$ 
such that $\pos(w)-\pos(v)\equiv \pos(w')-\pos(v')$ modulo $mL$
for some edge $w'\to v'$ of~$G$, there must exist a vertex $v''$
such that $\pos(w)-\pos(v'')=\pos(w')-\pos(v')$; 
and $v''\to w$ is an edge of $G$ for every such $v''$.
% For each edge $v\to w$ of~$G$, if $v',w'\in V(G)$ are such that
% $\pos(w)-\pos(v)\equiv \pos(w')-\pos(v')$ modulo $mL$
% and $w'$ is not one of the $in_{ij}$, 
% then $\pos(w')-\pos(v')=\pos(w)-\pos(v)$, and $v'\to w'$ is an edge of~$G$.  
\item The set
$(\{\pos(w)-\pos(v):v,w\in V(G)\} \cup (I-I))+mL$
contains no translate $p-I$ of~$-I$ other than the translates with
$p\in I+mL$ (which it manifestly does).
\item The set $\pos(V(G))+mL$ contains no translate of a set of form
$\{p-h(p):p\in I\}$ for some function $h:I\to I$ with $h(p)\neq p$ for all $p$.
\item For $v\to w$ an edge of $G$, $\pos(w)-\pos(v)$ does not lie in $(I-I)+mL$.  
\item No other $\pos(v)$ lies in $\pos(in')+I+mL$ or in $\pos(in'')+I+mL$.
The differences $\pos(w)-\pos(in'')$ for $in''\to w$ an edge are congruent
to no other differences $\pos(w')-\pos(v')$ modulo $mL$.  
\item $\pos(in')\in\N^2$ and $\pos(in'')\in\N^2$, but $\pos(in_{ij})\not\in\N^2$ for any $i,j$.
\item $\pos(out_j)_1<\pos(out_{j'})_1$ and $\pos(out_j)_2>\pos(out_{j'})_2$ for $j<j'$.
If $v\to out_j$ is an edge of $G$, then $v\not\in\pos(out_{j'})+\N^2$ for any $j'$.
\end{enumerate}

These have roughly the following functions: (a) ensures pointedness of the constructed ruleset;
(b) ensures that the circuit can be overlaid with itself as necessary to implement the recurrence;
(c) through~(f) handle aspects of obtaining the requisite outcomes of the positions
that are not gates; and (g) through~(i) ensure that the initial conditions come out as 
they should, with invocations of $in''$ only relevant in \varB\ and (i) only in \varA.

Condition~(e) implies that the complement $\Z^2\setminus(\pos(V(G))+mL)$
is a union of translates of~$I$.  For if it weren't possible
to cover $q\in \Z^2\setminus(\pos(V(G))+mL)$ with a translate of~$I$,
the elements of $\pos(V(G))+mL$ meeting each of the possible translates
of $I$ containing $q$ would form a translate of some $\{p-h(p):p\in I\}$.

Condition~(h) implies that $\pos(out_i)\in\N^2$ for all $i$.
Condition~(a) implies that $(1,0)$ and $(0,1)$ lie in the given halfspace.

\begin{lemma}\label{lem:possible}
There exists some $(\pos,m,I)$ satisfying these conditions.
\end{lemma}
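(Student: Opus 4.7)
The plan is to build $(\pos, m, I)$ by first fixing a generic bounded placement of the non-input vertices of $G$ in $\N^2$, then taking $m$ large enough that every $mL$-congruence among bounded differences collapses to an equality, and finally choosing $I$ to be a small downset in $\N^2$ compatible with the resulting edge differences.

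I begin by picking a direction $d\in\Q^2$ with $d_1,d_2>0$ and $\langle d,\beta_i\rangle>0$ for all $i$; this intersection is nonempty because the tangent cone axiom for $\N^2$ forces the $\beta_i$-halfspace to meet the open positive orthant. The halfspace $H=\{x:\langle d,x\rangle>0\}$ witnesses condition (a). I then topologically sort the non-$in_{ij}$ vertices of $G$ and assign them positions in $\N^2$ along direction $d$ with generic perturbations, so that no two of the pairwise differences coincide and so that no non-edge difference matches an edge difference. The outputs $\pos(out_j)$ go on an anti-diagonal in $\N^2$ with strictly increasing first coordinate and strictly decreasing second coordinate (condition (i)), and the extra vertices $in',in''$ are placed in $\N^2$ adjacent to the output stack, generically enough that (g) and the relevant part of (a) hold. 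Condition (b) then forces $\pos(in_{ij})=\pos(out_j)-m\beta_i$; for $m$ large enough this lies outside $\N^2$, verifying (h).

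For the ideal I take $I$ to be a small nontrivial finite downset in $\N^2$, e.g.\ $I=\{(0,0),(1,0),(0,1)\}$, chosen so that $I-I$ avoids every edge difference in $\pos$ (securing (f)) while still being large enough that (e) is meaningful. Let $D$ bound the norms of $\pos(v)-\pos(w)$ over non-input vertices $v,w$ and of elements of $I-I$. Taking $m$ large enough that every nonzero vector in $mL$ has norm exceeding $2D$ ensures that every $\pmod{mL}$-congruence appearing in (c), (d), (e), (g) reduces to a literal equality of vectors of norm at most $2D$. These conditions then become finitary non-coincidence requirements which the generic placement of the non-input gates automatically satisfies.

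The main obstacle is the joint satisfaction of (d), (e), and (f), which couple $\pos$, $m$, and $I$ in competing ways: $I=\{0\}$ would trivialize (e) but force every edge difference into $mL$, contradicting (f); an overly large $I$ would make $(I-I)+mL$ swallow all edge differences, again contradicting (f). The resolution is to keep $I$ small relative to the edge lengths in $\pos$, keep $\pos$ generic relative to both $I$ and $mL$, and take $m$ large enough that $mL$ acts trivially inside the bounded combinatorial window. With that balance struck, the remaining finite checks are routine.
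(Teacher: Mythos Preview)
Your approach---fix the non-input positions generically, then take $m$ large so that $mL$-congruences collapse to equalities---is different from the paper's, which treats $(\pos,m)$ jointly as a lattice point in a cone in $\R^{2|V(G)|+1}$ and uses an Ehrhart-type count to avoid finitely many bad linear subspaces.  Your route is more concrete but runs into a real gap.

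The central problem is that $\pos(in_{ij})=\pos(out_j)-m\beta_i$ depends on $m$, so you cannot ``fix $\pos$, then enlarge $m$'' and expect all differences to stay bounded.  Your $D$ only bounds differences of \emph{non-input} vertices; differences involving the $in_{ij}$ have norm of order $m$.  Consequently your key claim---that for $m$ large every $\pmod{mL}$-congruence in (c), (d), (f), (g) collapses to a literal equality of vectors of norm at most $2D$---is false.  For instance $\pos(out_j)-\pos(in_{ij})=m\beta_i\equiv 0\pmod{mL}$ is never an equality, and more generally every difference involving an input is congruent mod $mL$ to the same difference with $in_{ij}$ replaced by $out_j$.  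The paper flags exactly this: it strengthens (c) to forbid all congruences ``except for the unavoidable such congruences that are consequences of $in_{ij}\equiv out_j$.''  To rescue your argument you must pass to non-input vertices via this substitution, but then conditions (c), (d), (f) turn into new non-coincidence requirements (for example, that $\pos(w')-\pos(out_j)\notin I-I$ whenever $in_{ij}\to w'$ is an edge, even though $out_j\to w'$ is not) which your ``generic perturbation of non-input vertices'' was not designed to avoid and which you do not verify.

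A secondary gap: the fixed choice $I=\{(0,0),(1,0),(0,1)\}$ need not satisfy condition~(a).  That condition requires $(\N^2-I)\setminus(I-I)\subset H$, which for this $I$ forces $d_1/2<d_2<2d_1$; but your $d$ was chosen only to make the $\beta_i$ positive, and if say $\beta_1=(-1,10)$, $\beta_2=(10,-1)$ you could have picked $d=(1,5)$, for which $(2,-1)\in(\N^2-I)\setminus(I-I)$ lies outside $H$.  The paper avoids this by choosing $I$ \emph{after} $H$, as a rectangle matched to the normal direction.
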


\begin{proof}
Fix an open halfspace $H$ in which all the $\beta_i$ lie; this will be the halfspace
we use for condition~(a).
Fix any suitable $I$ of cardinality $>1$ which satisfies the restriction on~$I$
stated there; for instance, take $H$ to have a normal vector $(x,y)\in\N^2$,
and let $I$ be $\{(i,j):0\leq i\leq y,0\leq j\leq x\}$.
The remaining choices can be thought of as choosing an integer point 
$(\pos,m)$ from the space $\R^{2V(G)+1}$.

Condition~(b) means that $(\pos,m)$ must lie in a certain linear subspace $V$.
Conditions (a) and~(h) and~(i) impose linear inequalities.
We will supplement these with the further linear inequalities that each coordinate of each $\pos(v)$
is bounded in absolute value by $2mB$ where $B$ is the maximal absolute value of a coordinate of $\beta_i$.
These cut out a cone $C\subseteq V$ with nonempty relative interior.
Indeed, order the vertices $v_1,\ldots,v_{|V(G)|}$ of $G$ so that $i<j$ for
each edge $v_i\to v_j$.  Let $\epsilon$ be a small vector in $\operatorname{int}(H\cap\R_{\geq0}^2)$,
and $\zeta$ a much smaller vector with $\zeta_1>0>\zeta_2$.
Then sufficiently small perturbations of the point $m=1$, $\pos(out_j)=|V|\epsilon+j\zeta$, 
$\pos(in_{ij})=\pos(out_j)-\beta_i$, and $\pos(v_i)=i\epsilon$
for $v_i$ not of one of the preceding forms, lie in the interior of this cone.  
Finally, conditions (d), (e), (f), and~(g) can be recast to say that 
$(\pos, m)$ must not lie on some finite collection of proper linear subspaces of~$V$.
To get the finiteness, note that it suffices to state the
conditions with a finite subset of $mL$ in place of $mL$, on account of
our additional bounds on the coordinates of the $\pos(v)$.
The same holds for a strengthening of condition~(c), in which we simply demand 
that $\pos(w)-\pos(v)$ is never congruent to $\pos(w')-\pos(v')$ modulo $mL$
for $(v,w)\neq(v',w')$, except for the unavoidable such congruences 
that are consequences of $in_{ij}\equiv out_j$.

We conclude there must be a lattice point $(\pos,m)$ in $C$ 
lying off all of these finitely many subspaces (for instance, by asymptotics
of Ehrhart functions), and therefore that it's possible to fulfill all the conditions.
\end{proof}

In \varA, we need to specify a set $\mathcal D$ 
of defeated positions.  (Otherwise, there are no defeated positions, that is $\mathcal D=\emptyset$.)  
Recall that $M$ is the $L^+$-module that is the domain of the recursive function $f$.  
Let $\gens M$ be its minimal set of generators as an $L^+$-module;
these are the $\ell$ such that we have allowed $f(\ell)$ to take arbitrary values $f_0(\ell)$.
The set $\mathcal D$ will be a subset of $\N^2\times\{0,1\}$.
Let the $p_3=0$ slice (i.e.\ $\{\ell:(\ell,0)\in \mathcal D\}$) 
be $\N^2\setminus((M\setminus\gens M)+\N^2)$,
and the $p_3=1$ slice be the $p_3=0$ slice minus the union of 
$\N^2+(\pos(out_j)+m\ell)$ for every $\ell\in\gens M$ and $j\in[s]$
such that $\enc(f_0(\ell))_j = \Pp$.

%\enlargethispage{2\baselineskip} % kluge!

Let $F$ be the complement of the ideal in $\N^2$ generated by 
nonzero elements of $mL^+$; then $F$ contains a representative of
every class in $\Z^2/mL$.  
Let $B'$ be a set of $L^+$-module generators for $\bigcap_{i=1}^r (\beta_i+L^+)$,
and $B''$ a set of generators for $L^+\setminus 0$.
Define the ruleset $\Gamma\subseteq\Z^2\times\Z$ by
\begin{align}
\Gamma :=\!\!&\mathbin{\phantom\cup} \{(\pos(w)-\pos(v),0) : v\to w\in E(G), v\neq in'' \}
\label{eq:Gamma wires}
\\&\cup \{(p,0) : p\in F-I, p\not\in (I+mL)-I, 
\label{eq:Gamma slice 0}
\\&\qquad\quad  p\not\equiv \pos(w)-\pos(v)\mbox{ (mod $mL$) for any $v,w\in V(G)$}\}
\notag
\\&\cup \{(p,1) : p\in -m\{\beta_1,\ldots,\beta_r\}+F-I, p+I\subseteq \Z^2\setminus(\pos(V(G))+mL) \}
\label{eq:Gamma slice 1}
\\&\cup \{(\pos(in')+m\ell,1) : \ell\in B'\}
\label{eq:Gamma in'}
\\&\cup \{(\pos(in'')+m\ell,1) : \ell\in B''\}
\label{eq:Gamma in''}
\\&\cup \{(0,0,2)\}
\label{eq:Gamma tangent}
\\&\cup \Gamma_{\mathrm B}.
\label{eq:Gamma''}
\end{align}
If we are not in \varB, then the last term $\Gamma_{\mathrm B}$ is
empty.  If we are, then let $\gens M$ be the unique minimal set of $L^+$-module generators of~$M$,
and set
\begin{multline*}
\Gamma_{\mathrm B}=\{(\pos(v)-\pos(in'')+m\ell,0) : \ell\in \gens M, v\to out_j\in E(G)\} \\
\cup\{(\pos(out_j)-\pos(in'')+m\ell,0) : \ell\in \gens M, \enc(f_0(\ell))_j=\Nn\}.
\end{multline*}

Again, the general functions of these are as follows:
\eqref{eq:Gamma wires} handles the wires in the circuit; \eqref{eq:Gamma slice 0}
makes $\N^2\times\{0\}$ have the correct outcomes, and \eqref{eq:Gamma slice 1} does
similarly for nongates in $\N^2\times\{1\}$; \eqref{eq:Gamma in'} and \eqref{eq:Gamma in''}
make $in'$ and $in''$ behave, respectively; \eqref{eq:Gamma tangent} is only there to
handle a case of the tangent cone axiom; and \eqref{eq:Gamma''} sets up the initial conditions
in~\varB.

With the construction set up, the proofs of Theorems \ref{thm:1} 
and~\ref{thm:universal} reduce to showing the following two lemmas.
\begin{lemma}\label{lem:axioms}
The axioms for a ruleset are satisfied by $\Gamma$.  
\end{lemma}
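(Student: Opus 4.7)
The plan is to verify, in turn, each of the three defining axioms of a \cite{GM} ruleset — finiteness, pointedness, and the tangent cone axiom — for the set $\Gamma$ just constructed.

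\emph{Finiteness} is immediate: each of the summands \eqref{eq:Gamma wires}--\eqref{eq:Gamma''} is indexed by a finite set, namely the edges of the finite graph $G$, a finite subset of $\Z^2$ built from the finite sets $F$ and $I$, the finite generating sets $B'$ and $B''$, or the finite set $\gens M$.

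\emph{Pointedness} follows from condition~(a) on $\pos$. That condition provides an open halfspace $H\subseteq\R^2$ containing both every edge-difference $\pos(w)-\pos(v)$ of $G$ and the set $(\N^2-I)\setminus(I-I)$; the remark after condition~(i) records that $H$ contains $(1,0)$ and $(0,1)$, hence all of $\N^2\setminus\{0\}$, and so is closed under adding elements of $\N^2\setminus\{0\}$. The moves in \eqref{eq:Gamma wires} then lie in $H\times\{0\}$ directly, and those in \eqref{eq:Gamma slice 0} lie in $(\N^2-I)\setminus(I-I)\times\{0\}\subseteq H\times\{0\}$, since the restriction $p\not\in(I+mL)-I\supseteq I-I$ in \eqref{eq:Gamma slice 0} excludes $I-I$. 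For $\Gamma_{\mathrm B}$, each element has first two coordinates $\pos(v)-\pos(in'')+m\ell$, which lies in $H+(\N^2\setminus\{0\})\subseteq H$ because $\pos(v)-\pos(in'')$ is an edge-difference (edges out of $in''$ are included in condition~(a)) and $m\ell\in\N^2\setminus\{0\}$. Thickening $H\times\{0\}\subseteq\R^3$ by tilting in the positive-$x_3$ direction produces an open halfspace of $\R^3$ which, for sufficient tilt, contains the remaining moves \eqref{eq:Gamma slice 1}--\eqref{eq:Gamma tangent}, all of which have strictly positive third coordinate.

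\emph{The tangent cone axiom} is the main content and requires a face-by-face check on $\N^3$. Along the facet $p_3=0$ the only available inward move is $(0,0,2)\in$\eqref{eq:Gamma tangent}, which was inserted solely for this purpose, and this also covers the positions on the edge $p_1=p_2=0$. Along the facets $p_1=0$ and $p_2=0$ the needed inward tangent directions should come from the slice-$0$ moves \eqref{eq:Gamma slice 0} together with the wire moves \eqref{eq:Gamma wires}: here the fact that $F$ contains a representative of every class of $\Z^2/mL$, together with the hypothesized tangent cone axiom for the~$\beta_i$ in $\N^2$, yields enough moves with small coordinates in the boundary directions. The final assumption imposed on $\enc\circ g$ in the paragraph before Lemma~\ref{lem:possible} — that it depend nontrivially on some $\sigma_i$ with $(\beta_i)_1\leq0$ and on some $\sigma_{i'}$ with $(\beta_{i'})_2\leq0$ — is precisely what propagates the $\N^2$ tangent cone axiom into $\Gamma$, by ensuring that appropriate wire edges enter $out_1$ and $out_s$ with the requisite sign in each coordinate.

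The hard part will be the tangent cone axiom, because it must be verified at every position of $\N^3$, including at the lower-dimensional faces where several boundary conditions pile up; the other axioms are essentially bookkeeping. The argument will be a translation from the tangent cone axiom for the $\beta_i$ in $\N^2$ to the corresponding statement for $\Gamma$ in $\N^3$, unwinding how conditions~(a)--(i) on $\pos$, together with the construction of $F$, deliver enough inward-pointing moves along each boundary stratum.
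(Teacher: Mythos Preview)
Your finiteness and pointedness arguments are fine and essentially match the paper's. The tangent cone portion, however, has a real confusion and a gap.

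First, you have the role of $(0,0,2)$ backwards. The tangent cone axiom at a face $F$ asks for a move $\gamma$ with $p-\gamma\in\N^3$ for $p$ deep in the relative interior of $F$; equivalently, $-\gamma$ must lie in the tangent cone at $F$. At the facet $\{p_3=0\}$ this requires $\gamma_3\le 0$, which $(0,0,2)$ fails---subtracting it from a point with $p_3=0$ lands outside $\N^3$. The move $(0,0,2)$ is there for the \emph{ray} $\R_+e_3$ (your ``edge $p_1=p_2=0$''), where one needs $\gamma_1,\gamma_2\le 0$. Since the negative tangent cone at a ray is contained in that at any facet through it, the paper simply checks the three extreme rays $\R_+e_1,\R_+e_2,\R_+e_3$ and is done; organising the argument by facets is not wrong in principle, but your facet assignments are inverted.

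Second, for the rays $\R_+e_1$ and $\R_+e_2$ you have identified the right ingredients but not actually run the argument. The set \eqref{eq:Gamma slice 0} is not used, and the fact that $F$ meets every class of $\Z^2/mL$ is irrelevant here. The paper's step is concrete: take the path in $G'$ from some $in_{ij}$ with $(\beta_i)_1\le 0$ to $out_1$. The corresponding wire moves in \eqref{eq:Gamma wires} sum to $\pos(out_1)-\pos(in_{ij})$, whose first coordinate is $\le 0$ by conditions~(b) and~(i). Hence at least one individual wire move on that path has first coordinate $\le 0$ (and third coordinate $0$), which is exactly what the ray $\R_+e_2$ requires. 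The symmetric argument through $out_s$ with $(\beta_{i'})_2\le 0$ handles $\R_+e_1$. Your sketch says the right edges ``enter $out_1$ and $out_s$'', but the move with the correct sign need not be an edge incident to $out_1$ or $out_s$; it is merely some edge along the path.
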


\begin{lemma}\label{lem:cstr}
For any $\ell\in M$, we have that
\[ \enc(f(\ell)) = (o(\pos(out_1)+m\ell,1),\ldots,o(\pos(out_s)+m\ell,1)) \]
in the lattice game on $\N^3$ with ruleset $\Gamma$ and defeated positions $\mathcal D$.
\end{lemma}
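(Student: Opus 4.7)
The plan is a layered induction that determines outcomes first on $\N^2\times\{0\}$, then on $\N^2\times\{1\}$. By condition~(a), the moves in $\Gamma\setminus\{(0,0,2)\}$ all strictly decrease some fixed linear functional, and the tangent-cone move~\eqref{eq:Gamma tangent} is inapplicable to positions with $p_3\leq 1$; thus the normal-play recursion $o(q)=\Pp$ iff every option has outcome $\Nn$ is well-founded on these two slices, and positions with $p_3\geq 2$ are irrelevant.

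Stage~1 determines outcomes on $\N^2\times\{0\}$, where the moves in play are the wire moves~\eqref{eq:Gamma wires}, the slice-$0$ moves~\eqref{eq:Gamma slice 0}, and (in \varB) those in $\Gamma_{\mathrm B}$. The claim to be proved by induction is that the $\Pp$-positions form a pattern periodic modulo $mL$ and tied to $\pos(V(G))+mL$: the details are set up so that the slice-$1$ move~\eqref{eq:Gamma slice 1} from a non-gate point $(q,1)$ always reaches a $\Pp$-position in the lower slice, while from a gate $(\pos(v)+m\ell,1)$ the same move lands on an $\Nn$-position. Conditions~(c)--(f) are each used here to rule out a specific spurious congruence or covering: (e) is what allows the complement of $\pos(V(G))+mL$ to be covered by translates of $I$, (c) aligns the mod-$mL$ wire pattern with the circuit $G$, and (d), (f) forbid the remaining unwanted interactions.

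Stage~2 analyses $\N^2\times\{1\}$ using stage~1. Non-gate positions $(q,1)$ are $\Nn$ by~\eqref{eq:Gamma slice 1} as just described. At a gate $(\pos(v)+m\ell,1)$ I do a nested induction on $\ell\in M$ (in the halfspace order) and on the topological order of $v$ in~$G$. Condition~(c) applied at $p_3=1$ forces the wire-move options to be exactly $(\pos(u)+m\ell,1)$ for predecessors $u\to v$ in $G$; all other moves (via~\eqref{eq:Gamma slice 1},~\eqref{eq:Gamma in'},~\eqref{eq:Gamma in''},~$\Gamma_{\mathrm B}$) land at non-gate or auxiliary positions that are $\Nn$ by stage~1 or by construction of the initial data. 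Hence the nor-rule at $(\pos(v)+m\ell,1)$ reproduces the gate function of $G$ at $v$; iterating up the circuit from the input vertices $in_{ij}$, whose positions $\pos(out_j)+m(\ell-\beta_i)$ (by condition~(b)) encode $f(\ell-\beta_i)$ by the outer hypothesis, yields $o(\pos(out_j)+m\ell,1)=\enc(f(\ell))_j$.

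The base case $\ell\in\gens M$ is handled variant by variant. In \varA, the defeated positions $\mathcal D$ remove options at the output level so that the surviving outcomes directly encode $f_0(\ell)$; conditions~(h) and~(i) localize this effect to generators. In \varB, the vertex $in''$ and the moves in $\Gamma_{\mathrm B}$ inject extra options at output positions exactly when $\ell\in\gens M$, again overriding the recursion to produce $f_0(\ell)$. I expect the chief obstacle to be the meticulous bookkeeping of stage~2: verifying that at every gate position no move lands unintentionally on another gate or on a $\Pp$-position outside the circuit. Conditions~(c)--(g) were introduced precisely to rule out each possible such interference one by one, so the proof will proceed by pinning each condition to the case it exists to forbid.
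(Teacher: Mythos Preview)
Your plan follows the paper's route: analyse slice $\N^2\times\{0\}$ first (this is the paper's Lemma~\ref{lem:slice 0}), then slice $\N^2\times\{1\}$, splitting into non-gate and gate positions and running induction on~$\ell$ and on the circuit. Two points need correcting.

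First, on slice~$0$ the $\Pp$-positions are the non-defeated points of $(I+mL)\cap\N^2$, not a set ``tied to $\pos(V(G))+mL$''. The set $I+mL$ is what the slice-$0$ part~\eqref{eq:Gamma slice 0} of the ruleset is engineered to stabilise; conditions~(d) and~(f) are what get used here. Condition~(e) belongs to Stage~2, not Stage~1: it is what guarantees that a non-gate point $(q,1)$ has a move in~\eqref{eq:Gamma slice 1} landing in $I+mL$.

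Second, and more substantively, it is not true that at every gate $(\pos(v)+m\ell,1)$ the non-wire moves land only on $\Nn$-positions. That holds for $v\in G'$, but for $v=in'$ (and $v=in''$ in \varB) the moves in~\eqref{eq:Gamma in'} and~\eqref{eq:Gamma in''} can hit $\Pp$-positions in slice~$0$, and this is precisely the mechanism that realises the ``otherwise'' branch of~\eqref{eq:recursion}: one checks that $(\pos(in')+m\ell,1)$ is a $\Pp$-position iff some $\ell-\beta_i\notin M$, and this $\Pp$ then feeds into each $out_j$ via the edge $in'\to out_j$, forcing the output to encode~$\sigma_0$ as in~\eqref{eq:G circuit}. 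Your sketch only invokes the circuit on the inputs $in_{ij}$ encoding $f(\ell-\beta_i)$, which tacitly assumes the first branch of~\eqref{eq:recursion}; you must treat $in'$ (and $in''$) as genuine inputs with their own separately determined outcomes, and then the circuit computes~\eqref{eq:G circuit} rather than $\tilde g$ alone.
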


\begin{proof}[Proof of Lemma~\ref{lem:axioms}]
The halfspace $H$ of condition~(a) on the construction
provides a functional on which $\Gamma\cap (\N^2\times\{0\})$, and the generators 
$(1,0,0)$ and $(0,1,0)$ of the board, are both positive.  The finitely many remaining elements of
$\Gamma$, and the last generator $(0,0,1)$ of the board, have third coordinate positive,
so the functional can be lifted from $\N^2$ to $\N^3$ in some way making these all positive.

The tangent cone axiom for the rays $\R_+(1,0,0)$ and $\R_+(0,1,0)$ follows from the 
tangent cone axiom for $\beta$ and the existence of the paths in $G'$ to 
$out_1$ and $out_s$ that we imposed.  To make the argument for $out_1$, 
there is a path in $G'$ from some $in_{ij}$ where $(\beta_i)_1\leq 0$.
This path yields a sequence of moves in $\Gamma$ of form $\pos(w)-\pos(v)$,
whose sum $\pos(out_1)-\pos(in_{ij})$ has nonpositive first coordinate by 
conditions (b) and~(i).  Therefore, one of the moves in the sum must have
nonnegative first coordinate also (and its third coordinate is zero).  
The tangent cone axiom for the ray $\R_+(0,0,1)$ is provided for by the element $(0,0,2)\in\Gamma$.
\end{proof}

For Lemma \ref{lem:cstr}, we will introduce a subsidiary lemma.

\begin{lemma}\label{lem:slice 0}
The $\Pp$-positions on $\N^2\times\{0\}$ are exactly the
non-defeated positions among $((I+mL)\cap\N^2)\times\{0\}$.
\end{lemma}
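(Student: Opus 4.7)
My plan is to argue by well-founded induction on non-defeated positions $(p, 0) \in \N^2 \times \{0\}$, using the partial order induced by $\Gamma$ (well-founded thanks to condition (a)). The only relevant moves $(q, 0) \in \Gamma$ are those with $q_3 = 0$, which come from \eqref{eq:Gamma wires}, \eqref{eq:Gamma slice 0}, and $\Gamma_{\mathrm B}$; moves with positive third coordinate leave the slice.

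For the forward direction, I would assume $p \in (I + mL) \cap \N^2$ and show every option $(p - q, 0)$ satisfies $p - q \not\in I + mL$. A move from \eqref{eq:Gamma wires} or $\Gamma_{\mathrm B}$ is congruent modulo $mL$ to an edge difference $\pos(w) - \pos(v)$; condition (f) places this outside $(I - I) + mL$, so $p - q \not\in I + mL$. A move from \eqref{eq:Gamma slice 0} has $q \not\in (I + mL) - I = (I - I) + mL$ by construction, yielding the same. The induction hypothesis will then render every option an $\Nn$-position, so $(p, 0)$ is $\Pp$.

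For the reverse direction, I would assume $p \in (\N^2 \setminus (I + mL)) \cap \mathcal B$ and exhibit a $\Pp$-option via \eqref{eq:Gamma slice 0}. Condition (d), in contrapositive, supplies some $i \in I$ with $[p - i] \bmod mL$ avoiding both the edge-difference classes and $(I - I)/mL$, so any $q \in F - I$ in the class $[p - i]$ automatically meets the two exclusion conditions of \eqref{eq:Gamma slice 0}. For each $i' \in I$ the candidate $q_{i'} := f_{i'} - i'$, where $f_{i'} \in F$ represents $[p - i + i']$, satisfies $p - q_{i'} \equiv i \pmod{mL}$, hence $p - q_{i'} \in I + mL$; choosing $i'$ large enough within the rectangular shape of $I$ from Lemma~\ref{lem:possible} ensures $p - q_{i'} \in \N^2$, since then $f_{i'} \leq p - i + i'$ componentwise by minimality of the $F$-representative.

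The main obstacle will be arranging that $p - q_{i'}$ is also non-defeated. Since the defeated region in the $p_3 = 0$ slice is a finite down-set in $\N^2$, and varying $i'$ over $I$ yields distinct representatives of the coset $i + mL$, some choice should land outside the defeated region; handling values of $p$ near that region will require selecting $i$ judiciously among those guaranteed by condition (d) (of which there may be several), possibly iterating the argument. Granted this, the induction hypothesis gives that the resulting option is a $\Pp$-position, so $(p, 0)$ is $\Nn$.
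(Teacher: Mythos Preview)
Your forward direction is correct and matches the paper's argument exactly: no third-coordinate-zero move lies in $(I-I)+mL$, by condition~(f) for \eqref{eq:Gamma wires} and~$\Gamma_{\mathrm B}$ and by the explicit exclusion for \eqref{eq:Gamma slice 0}.

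Your reverse direction has the right starting idea---invoke condition~(d) in contrapositive to find $i\in I$ with $p-i$ avoiding both forbidden residue classes---but the subsequent construction of the move is where the plan goes wrong. The detour through a variable $i'\in I$ and ``the $F$-representative $f_{i'}$'' is not well-defined ($F$ need not contain a unique, let alone minimal, representative of a given $mL$-coset), and your claim that a suitable choice of $i'$ forces $f_{i'}\leq p-i+i'$ componentwise does not follow from the definition of~$F$. More seriously, you yourself flag the non-defeatedness of the target as unresolved, and ``iterating the argument'' is not a proof.

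The paper handles both obstacles in one stroke by reversing the order of choices. Rather than first picking $i$ and then hunting for a move in the right coset, it first fixes $\ell\in mL$ with $\ell\leq p$ componentwise and maximal with this property, chosen moreover so that $\ell$ dominates some $\N^2$-generator of the non-defeated ideal (possible because all such generators lie in $mL$). Maximality gives $p-\ell\in F$ directly, and the choice of $\ell$ guarantees $\ell+I\subseteq\N^2$ is entirely non-defeated. Now one simply observes that $(p-\ell)-I\subseteq F-I$, and condition~(d) forces some element of this set to satisfy both exclusion clauses of~\eqref{eq:Gamma slice 0}; the resulting option lands in $\ell+I$, which is already known to consist of non-defeated $\Pp$-positions by induction. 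No search over $i'$, no appeal to representatives, and no residual obstacle.
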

As a corollary of the definition of the defeated positions,
for $\ell\in L$, it holds that $(m\ell,0)$ is a $\Pp$-position if and only if $\ell\in M$.

\begin{proof}
The elements of $\Gamma$ that have relevance to the outcomes on  $\N^2\times\{0\}$
are those in \eqref{eq:Gamma wires} and \eqref{eq:Gamma slice 0} and~\eqref{eq:Gamma''}.
We proceed by induction on the pairing of a position $p$
with the normal of $H$.  There are two cases.
Let $p\in\N^2$ not be in $I+mL$;
we want to show $(p,0)$ an $\Nn$-position.  
Let $\ell$ be a point of $mL$ in $p-\N^2$ such that $(p-\N^2)\cap(\ell+\N^2)$ contains
no other point of $mL$; there is such an $\ell$, since all $\N^2$-generators of
the set of defeated positions lie in $mL$.  
Then we have $p-\ell\in F$.  Now, $(p-\ell)-I$ must intersect the
set \eqref{eq:Gamma slice 0}; this could only fail to be the case if
every element of $(p-\ell)-I$ had the form $\pos(w)-\pos(v)$ modulo $mL$.   
But that is ruled out by condition~(d).  So there is a move from $(p,0)$ to an element of
$\ell+I$, which is a $\Pp$-position.

For the other case, let $p\in ((I+mL)\cap\N^2)$.  We want to show $(p,0)$ is an $\Nn$-position,
i.e.\ it has no move to a $\Pp$-position, which is by the inductive hypothesis necessarily
of form $(p',0)$ for $p'\in I+mL$.
But no element of $\Gamma$ lies in $(I-I)+mL$.  For \eqref{eq:Gamma wires} and~\eqref{eq:Gamma''}
this is by condition~(f), and for \eqref{eq:Gamma slice 0} it is direct.  
This proves the lemma.
\end{proof}

\begin{proof}[Proof of Lemma~\ref{lem:cstr}]
Suppose $(p,1)\in\N^2$ is not in $\pos(V(G))+mL$
and is not defeated.  It then follows from Lemma~\ref{lem:slice 0} that $(p,1)$ is an $\Nn$-position.
By construction there exists $i$ so that 
$(p+m\beta_i,0)$ is not defeated, and therefore as before there is $\ell\in mL^+$
not defeated such that $p+m\beta_i-\ell \in F$.  Then by condition~(e), one of the
elements of \eqref{eq:Gamma slice 1} is a move from $p$ to an element of $\ell+I$, 
which is a $\Pp$-position.

On the other hand, for $w\in G$ and $\ell\in L^+$, let $p=\pos(w)+m\ell$. 
If $w\in G'$, then there are no moves from $(p,1)$ to a $\Pp$-position
in $\N^2\times\{0\}$: none of \eqref{eq:Gamma slice 1}, \eqref{eq:Gamma in'},
or \eqref{eq:Gamma in''} can provide them, the last two by condition~(g).
This implies that, if $(p,1)$ is an $\Nn$-position with $w\in G'$, it must have a
$\Pp$-position option which is also in $(\pos(V(G))+mL^+)\times\{1\}$.

If $v=in'$ or $v=in''$, then there may be a $\Pp$-position option in $\N^2\times\{0\}$,
by \eqref{eq:Gamma in'} in the former case,
or \eqref{eq:Gamma in''} in the latter.
Indeed, we can read off the definition of these parts of the ruleset,
using our claim,
exactly when these are available.  For $w=in'$, this is when $\ell$ is in 
$M+\bigcap_i(\beta_i+L^+)$, i.e.\ exactly when $f(\ell)$ is defined
by the recursion \eqref{eq:recursion}.
As for $w=in''$, recall that we have only introduced $in''$ into $G$ 
for \varB; in particular, we are
not in \varA, and there are no defeated positions.
Thus in this case $(p,1)$ has a $\Pp$-position option in $\N^2\times\{0\}$
exactly when $\ell\neq 0$.

Continue to let $p=\pos(w)+m\ell$.  The options of $(p,1)$ 
in $(\pos(V(G))+mL)\times\{1\}$ are obtained by subtracting elements of 
\eqref{eq:Gamma wires} and~\eqref{eq:Gamma''}
(we can exclude \eqref{eq:Gamma slice 0}, by definition).
If $w$ is not one of the $in_{ij}$, then by condition~(c),
if $q\in \pos(V(G))+mL$ and $(q,1)$ is an option of~$(p,1)$ via 
subtracting a vector in \eqref{eq:Gamma wires}, 
we can write $q=\pos(v)+m\ell$ for the same $\ell\in L$,
and $v\to w$ an edge of~$G$.  
If $(q,1)$ is an option of $(p,1)$ via subtracting a vector in \eqref{eq:Gamma''},
then $q=\pos(in'')+m\ell$ for some $\ell\in L$, by condition~(g).

Since there are no edges in $G$ to $in'$ or $in''$, the same argument shows
there are no moves to $\Pp$-positions in $\N^2\times\{0\}$ for these vertices.  Hence
\begin{quote}
$(\pos(in')+m\ell,1)$ is $\Pp$ if and only if $\ell-\beta_i\not\in M$ for some $i$; \\
$(\pos(in'')+m\ell,1)$ is $\Pp$ if and only if $\ell= 0$.
\end{quote}

\medskip

To prove the statement of Lemma~\ref{lem:cstr} we use induction,
for $\ell\in M$, on the pairing with the normal of $H$.  

The base case is when $\ell$ is a generator of~$M$
and we have allowed arbitrary initial conditions, $f(\ell)=f_0(\ell)$.
If we are in \varA, then by construction of the defeated positions, 
$(\pos(out_j)+m\ell,1)$ is defeated if and only if $\enc(f_0(\ell))_j=\Nn$.
By condition~(i), if $\enc(f_0(\ell))_j=\Pp$, 
then $(\pos(out_j)+m\ell,1)$ has no undefeated options in $\N^2\times\{1\}$, 
so $(\pos(out_j)+m\ell,1)$ is a $\Pp$-position.

In \varB, there is a vector in \ref{eq:Gamma''} providing $(\pos(in''),1)$ as an 
option of $(\pos(out_j)+m\ell,1)$ exactly when $\enc(f_0(\ell))_j=\Nn$,
as well as for $(\pos(v)+m\ell,1)$ for all generators $\ell$ of $M$ and edges $v\to out_j$.
Since $(\pos(in''),1)$ is a $\Pp$-position, this makes 
$(\pos(out_j)+m\ell,1)$ an $\Nn$-position when $\enc(f_0(\ell))_j=\Nn$,
whereas if $\enc(f_0(\ell))_j=\Nn$ it makes all remaining options of 
$(\pos(out_j)+m\ell,1)$ into $\Nn$-positions, so that $(\pos(out_j)+m\ell,1)$ is a $\Pp$-position.

We also note that in \varB, as since $(\pos(in'')+m\ell,1)$ is an $\Nn$-position
for $\ell\in L^+\setminus\{0\}$, the vectors in the ruleset from \eqref{eq:Gamma''}
play no further role in determining the outcomes.  
So, having handled the generators of $M$, we can proceed as though
$in''$ is not present from now on. 

Now, let $\ell$ not be a generator of $M$.  
For each vertex $w$ of~$G$ other than an $in_{ij}$ or $in'$, 
we have shown that the options of $(\pos(w)+m\ell,1)$ in $\NN\times\{1\}$
are exactly the positions $(\pos(v)+m\ell,1)$ where $v\to w$ is an edge of~$G$.
Thus, if $o:G(V)\to O$ 
is defined such that the outcome of $(\pos(w)+m\ell,1)$ is $o(w)$ for all~$w$,
then $o$ satisfies \eqref{eq:o(G)1},

By the inductive hypothesis, the outcome of 
$\pos(in_{ij}+m\ell,1) = \pos(out_j+m(\ell-\beta_i),1)$ is
$\enc(f(\ell-\beta_i))_j$, if $\ell-\beta_i\in M$.  As noted above,
the outcome of $(\pos(in')+m\ell,1)$ is $\Pp$ if and only if some 
$\ell-\beta_i\not\in M$.
Thus $o:G(V)\to O$ also satisfies \eqref{eq:o(G)0}, and
$o(in')=\Pp$ iff some $\ell-\beta_i\not\in M$.  Accordingly, 
$o$ must compute the function in \eqref{eq:G circuit}.
Comparing to \eqref{eq:recursion}, we have completed the proof.
\end{proof}

\begin{example}\label{ex:xor realised}
We present a lattice game without a rational strategy.
This example deviates from the construction described above in some particulars,
though the fundamental operation of the construction is the same.
This is in the interest of minimality: our last ruleset $\Gamma'$ is the result
of some earnest effort to minimize its size. 

Of note among the deviations is that,
instead of using a vertex $in'$ and the vectors in \eqref{eq:Gamma in'} to detect
the initial conditions of the recurrence, we choose the positions of the gates 
so that $out_1$ itself plays the analogous role.  
In addition, we use \varC, since we do not use the parts of the construction
meant to allow for arbitrary input to computations.

Let $L=\Z^2$, $M=L^+=\N^2$, and let $f:M\to O$ be defined by 
\[f(i,j) = \left\{\begin{array}{ll}
f(i,j-1)\mathbin{\mathrm{xor}} f(i-1,j) & i,j\geq 1\\
\Pp & \mbox{otherwise}
\end{array}\right.\]
Recall our identification $(\Pp,\Nn) = (\mbox{true}, \mbox{false})$.
Thus $\beta_1=(1,0),\beta_2=(0,1)$ in the notation above.

It is easy to check that $f(i,j) = \Pp$ exactly if $\binom{i+j}i$ is odd,
and in particular therefore that, for any natural $n$, we have
$f(i,j)=\Pp$ if $i=2^n>j$ or $j=2^n>i$, but
$f(i,j)=\Nn$ if $i,j<2^n$ while $i+j\geq 2^n$.

Therefore, this game has no affine stratification.  If it had one, 
this would induce an affine stratification on the subset
$\N^2\times\{1\}$.  In particular there
would be some 2-dimensional cone $C\subseteq\R_{\geq0}^2$ such that
the outcome function $C\cap N^2\to O$, $p\mapsto o(p,1)$ factored
through $(C\cap N^2)/L$ for some rank 2 lattice $L$.  
$C$ meets the interior of either $C_1=\R_{\geq0}(1,0)+\R_{\geq0}(1,1)$
or $C_2=\R_{\geq0}(0,1)+\R_{\geq0}(1,1)$, without loss of generality $C_1$;
by intersecting it with $C_1$, we can assume $C\subseteq C_1$.
Let $\ell\in L$ with $\ell_2>0$.  Then, for large enough $n$, there
must exist a point $p=(i,2^n)\in C$ such that $p-\ell\in C$, 
and $(p-\ell)_1+(p-\ell)_2\geq 2^n$.  Then $(p,1)$ is a $\Pp$-position
and $(p-\ell,1)$ is an $\Nn$-position, contradiction.

\medskip

\begin{figure}[ht]
\includegraphics[width=2.5in]{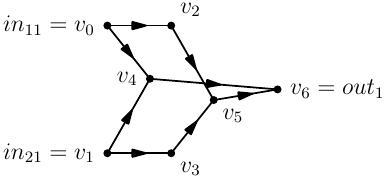}
\caption{A nor circuit computing xor.}\label{fig:xor}
\end{figure}

We take the encoding $\enc:O\to O$ to be the identity.
A nor circuit computing xor is given in Figure~\ref{fig:xor}.
We take $m=6$, $I=\{(0,0),(1,0),(2,0),(0,1)\}$, and 
$\pos$ to be given by the following table.
\begin{center}
\begin{tabular}{r|ccccccc}
     $v $ &   $v_0$ &   $v_1$ &   $v_2$ &   $v_3$ &   $v_4$ &   $v_5$ &   $v_6$ \\
$\pos(v)$ &$(-6, 0)$&$( 0,-6)$&$(-5, 1)$&$( 1,-5)$&$(-1,-2)$&$(-2,-1)$&$( 0, 0)$
\end{tabular}
\end{center}
It is a routine check that this satisfies conditions (a) through~(f) and (i).  
Observe that $\pos(v_2)\equiv\pos(v_3)$ mod $mL$,
so that positions of $\N^2\times\{1\}$ corresponding to one of $v_2$ and $v_3$
will generally correspond to both in different translates; this is not problematic.

We must argue that our construction makes $o(m\ell+\pos(out_1),1)=o(6\ell,1)$
equal to $f(\ell)=\Pp$ when $\ell_1=0$ or $\ell_2=0$, since our proof relied on 
the vertex $in'$ for that.  Our proof still implies that $o(m\ell+\pos(out_1),1)=\Pp$
exactly when there is no edge $v\to out_1$ such that $o(m\ell+\pos(v),1)=\Pp$.
But, if $\ell_1=0$ or $\ell_2=0$, then $m\ell+\pos(v)$ is not in $\N^2$
for any $v$.  Therefore $o(m\ell+\pos(out_1),1)=\Pp$ as desired.  

The ruleset $\Gamma$ turns out to be
\begin{align*}
\Gamma =\!\!&\mathbin{\phantom\cup} 
    \{(1,1,0),(5,-2,0),(-1,4,0),(3,-2,0),(-3,4,0),(1,2,0),(2,1,0)\}
\\&\cup \{(-2,2,0),(-2,4,0),(-1,2,0),(-1,3,0),(0,2,0)(0,3,0)(0,4,0),
\\&\qquad (1,3,0),(1,4,0),(2,2,0),(2,4,0),(3,-1,0),(3,0,0),(3,1,0),
\\&\qquad (3,3,0),(3,5,0),(4,2,0),(4,4,0),(5,2,0),(5,3,0)\}
\\&\cup (\{(-6,0,0),(0,-6,0)\}+\{(-2,1,1),(-2,2,1),(-2,3,1),(-1,2,1),
\\&\qquad (-1,5,1),(0,2,1),(0,3,1),(0,4,1),(0,5,1),(1,-1,1),(1,2,1),
\\&\qquad (1,3,1),(1,4,1),(1,5,1),(2,0,1),(2,1,1),(2,2,1),(2,3,1),
\\&\qquad (2,4,1),(3,0,1),(3,1,1),(3,2,1),(3,3,1),(4,0,1),(4,1,1),
\\&\qquad (4,2,1),(4,3,1),(5,-1,1),(5,0,1),(5,1,1),(5,2,1),(5,5,1)\})
\end{align*}
The three sets here are the lines \eqref{eq:Gamma wires},
\eqref{eq:Gamma slice 0}, and \eqref{eq:Gamma slice 1}.
We leave out the other lines, as these correspond to features of the construction 
we have no need for.

The following smaller ruleset $\Gamma'$ also suffices.  We omit some omissible elements in 
the second and third sets, and as well translate some of the elements of the
third set by $mL$.
\begin{align*}
\Gamma' =\!\!&\mathbin{\phantom\cup} 
    \{(1,1,0),(5,-2,0),(-1,4,0),(3,-2,0),(-3,4,0),(1,2,0),(2,1,0)\}
\\&\cup \{(0,2,0),(0,4,0),(1,4,0),(2,2,0),(2,4,0),(3,0,0),(3,1,0),
\\&\qquad (3,3,0),(3,5,0),(4,2,0),(4,4,0)\}
\\&\cup \{(-2,1,1),(-1,-1,1),(-1,2,1),(0,3,1),(1,-1,1),(1,4,1),(2,0,1),
\\&\qquad (2,1,1),(3,2,1),(4,3,1)\}
\end{align*}

\begin{figure}[ht]
\includegraphics{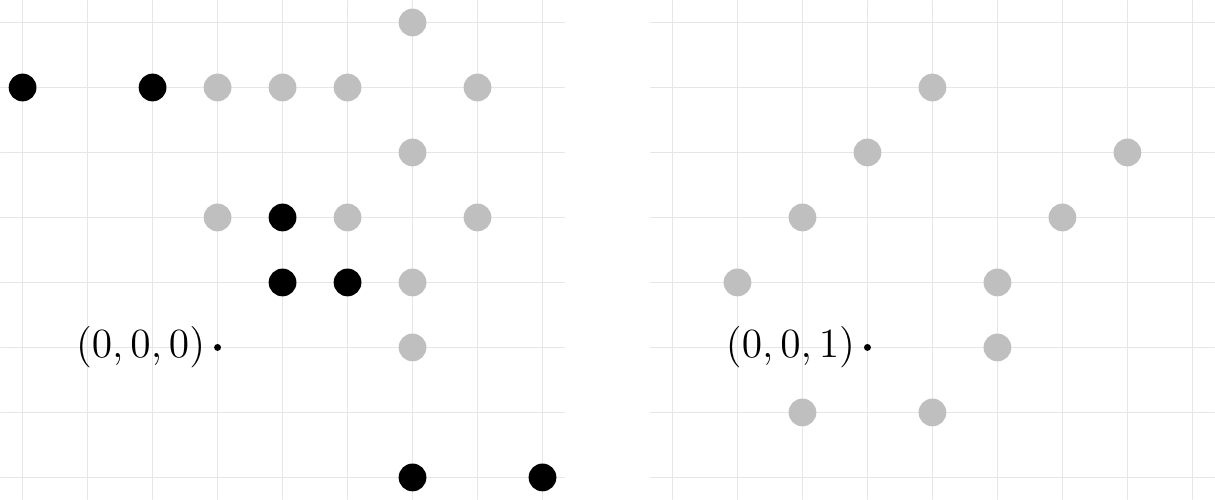}
\caption{The ruleset $\Gamma'$, drawn in the two slices $\Z^2\times\{0\}$,
$\Z^2\times\{1\}$.  Elements on the first line, those corresponding to
wires in the circuit, are black; the others are grey.
(Smaller dots next to labels are not in the ruleset.)}
\label{fig:ruleset}
\end{figure}

This last ruleset $\Gamma'$, of size 28, is portrayed in Figure~\ref{fig:ruleset}.  
The reader or their computer may like to check that the lattice game defined by $\Gamma'$
does indeed have the claimed set of $\Pp$-positions.  Plotting the outcomes of positions $(6i,6j,1)$
will yield the familiar Sierpinski gasket described by $f$, as in Figure~\ref{fig:sierpinski}.

\begin{figure}[ht]
\includegraphics[width=0.475\textwidth]{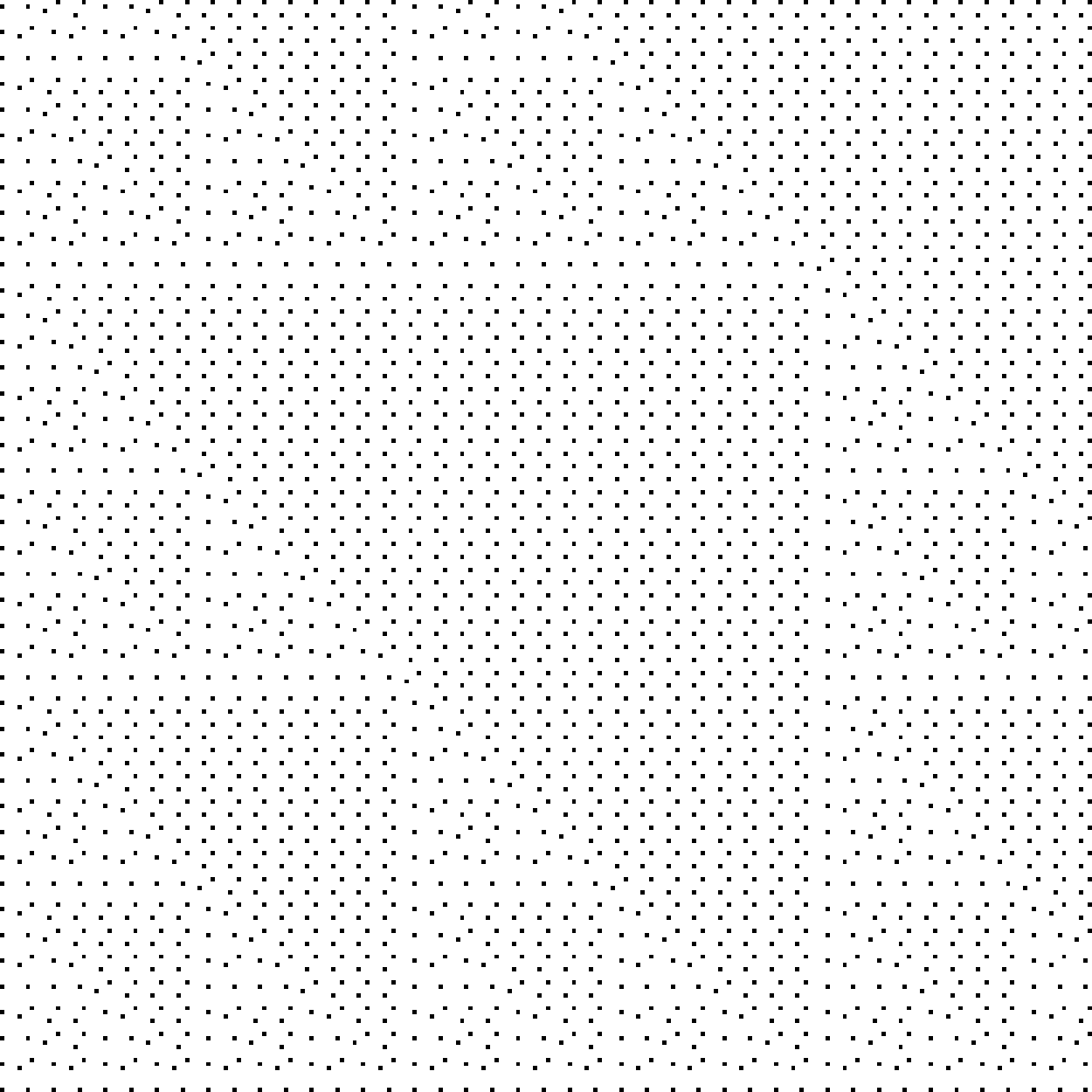}\quad
\includegraphics[width=0.475\textwidth]{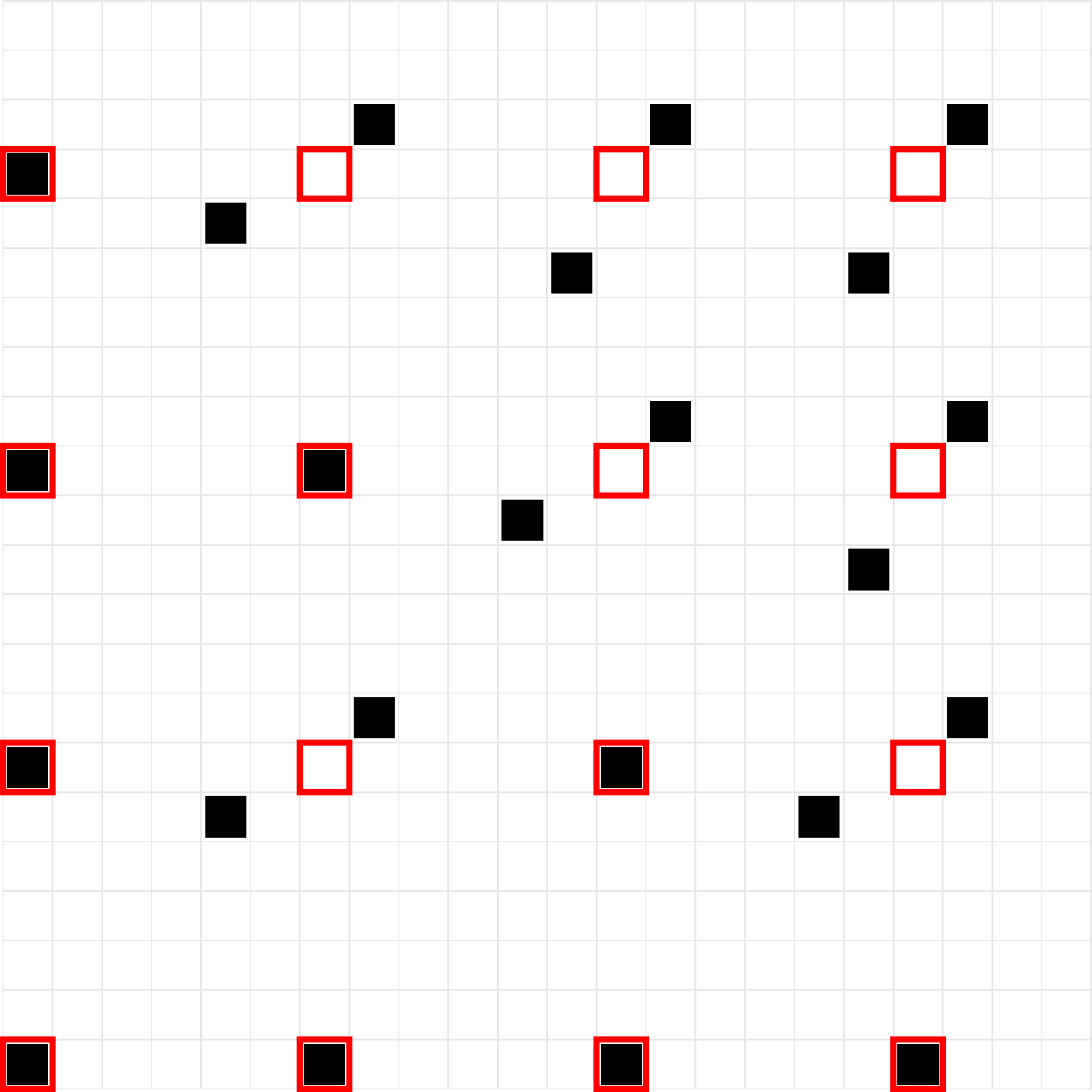}
\caption{Left: the first few outcomes of positions $(x,y,1)$, plotted
in the usual orientation, $\Pp$-positions colored black. Right: a close-up
of the left near the origin, with the positions $(6x,6y,1)$ highlighted.}
\label{fig:sierpinski}
\end{figure}

\end{example}


\begin{thebibliography}{99}
\bibitem{BW} A.~Barvinok and K.~Woods, {\em Short rational generating functions
for lattice point problems}, J.\ Amer.\ Math.\ Soc.\ {\bf 16} (2003), 957--979 (electronic).
\bibitem{Grundy} P.~M.~Grundy, {\em Mathematics and games}, 
Eureka {\bf 2} (1939), 6--8; reprinted in Eureka {\bf 27} (1964), 9--11.
\bibitem{GM} A.~Guo and E.~Miller, {\em Lattice point methods in combinatorial games}, 
Adv.\ in Appl.\ Math.\ {\bf 46} (2011), 363--378.
\bibitem{GM2} A.~Guo and E.~Miller, {\em Algorithms for lattice games}, 
preprint (2011), arXiv:1105.5413v1.
\bibitem{GS} R.~K.~Guy and C.~A.~B.~Smith, {\em The $G$-values of various games}, 
Proc.\ Cambridge Philos.\ Soc.\ {\bf 52} (1956), 514--526.
\bibitem{PS} T.~Plambeck and A.~Siegel, {\em Mis\`ere quotients for impartial games}, 
J.\ Combin.\ Theory Ser.\ A {\bf 115} no.~4 (2008), 593--622. arXiv:math.CO/0609825v5.
\bibitem{Smith} A.~R.~Smith~III, {\em Simple computation-universal cellular spaces}, 
J.\ ACM {\bf 18} no.~3 (1971), 339--353.
\bibitem{Speyer-note} D.~Speyer, {\em Payne's theorem and coefficients of rational
power series}, unpublished, \verb|http://www.math.lsa.umich.edu/~speyer/PowSerNote.pdf|.
\bibitem{Sprague} R.~P.~Sprague, {\em \"Uber mathematische Kampfspiele}, 
T\=ohoku Math.\ J.\ {\bf 41} (1935--1936), 438--444.

\end{thebibliography}
\end{document}